\newcommand{\bburl}[1]{\textcolor{blue}{\url{#1}}}
\newcommand{\monthyear}[1]{%
  \def\@monthyear{\uppercase{#1}}}
\newcommand{\volnumber}[1]{%
  \def\@volnumber{\uppercase{#1}}}
\def\ps@plain{\ps@empty
  \def\@oddfoot{\@monthyear \hfil \thepage}%
  \def\@evenfoot{\thepage \hfil \@volnumber}}
\def\ps@firstpage{\ps@plain}
\def\ps@headings{\ps@empty
  \def\@evenhead{%
    \setTrue{runhead}%
    \def\thanks{\protect\thanks@warning}%
    \uppercase{\ }\hfil}%
  \def\@oddhead{%
    \setTrue{runhead}%
    \def\thanks{\protect\thanks@warning}%
    \hfill\uppercase{Gaussian Behavior in Zeckendorf Decompositions From Lattices}}%
  \let\@mkboth\markboth
  \def\@evenfoot{%
    \thepage \hfil \@volnumber}%
  \def\@oddfoot{%
    \@monthyear \hfil \thepage}%
  }%
\theoremstyle{plain}
\numberwithin{equation}{section}
\newtheorem{thm}{Theorem}[section]
\newtheorem{lemma}[thm]{Lemma}
\newcommand{\ncr}[2]{{#1 \choose #2}}
\newcommand{\ignore}[1]{}
\newcommand\be{\begin{eqnarray}}
\newcommand\ee{\end{eqnarray}}
\newcommand\bea{\begin{eqnarray}}
\newcommand\eea{\end{eqnarray}}
\newcommand\ben{\begin{enumerate}}
\newcommand\een{\end{enumerate}}
\newtheorem{lem}[thm]{Lemma}
\newcommand{\R}{\ensuremath{\mathbb{R}}}
\begin{document}

\monthyear{ \ }
\volnumber{ \ }
\setcounter{page}{1}

\title{Gaussian Behavior in Zeckendorf Decompositions From Lattices}

\author{Eric Chen, Robin Chen, Lucy Guo, Cindy Jiang, Steven J. Miller, Joshua M. Siktar, Peter Yu}

\address{High School Affiliated to Renmin University, Zhongguancun Road 37, Haitian District, Beijing, China}
\email{2821524834@qq.com}

\address{The Eureka Program}
\email{chenziyang20010420@126.com}

\address{The Eureka Program}
\email{530824689@qq.com}

\address{Cheltenham Ladies' College, Gloucestershire, The United Kingdom GL50 3EP}
\email{xycindyjiang@gmail.com}

\address{Department of Mathematics and Statistics, Williams College, Williamstown, MA 01267}
\email{sjm1@williams.edu}

\address{Department of
  Mathematical Sciences, Carnegie Mellon University, Pittsburgh, PA 15213}
  \email{jsiktar@andrew.cmu.edu}

\address{The Taft School, Watertown, CT 06795}
\email{peter0201yu@gmail.com}

\subjclass[2000]{11B02 (primary), 05A02 (secondary).}

\keywords{Zeckendorf Decompositions, simple jump path, two-dimensional lattice, Gaussian Distribution}

\thanks{We thank Ray Li for helpful conversations. This project was begun when Miller was visiting Carnegie Mellon; he thanks the institution for its hospitality. This work was supported by NSF grants DMS1265673 and DMS1561945 and the Eureka Program.}

\begin{abstract}
Zeckendorf's Theorem states that any positive integer can be written uniquely as a sum of non-adjacent Fibonacci numbers. We consider higher-dimensional lattice analogues, where a legal decomposition of a number $n$ is a collection of lattice points such that each point is included at most once. Once a point is chosen, all future points must have strictly smaller coordinates, and the pairwise sum of the values of the points chosen equals $n$. We prove that the distribution of the number of summands in these lattice decompositions converges to a Gaussian distribution in any number of dimensions. As an immediate corollary we obtain a new proof for the asymptotic number of certain lattice paths.
\end{abstract}
\maketitle

\tableofcontents

\section{Introduction}
\label{sec:introduction}

Zeckendorf's Theorem states that any positive integer can be uniquely written as the sum of non-consecutive Fibonacci numbers $\{F_n\}$, defined by $F_1 = 1, F_2 = 2$, and $F_{n + 1} = F_n + F_{n - 1}$ for all $n \geq 2$ \cite{Ze}. We call this sum a number's \textbf{Zeckendorf decomposition}, and interestingly this leads to an equivalent definition of the Fibonaccis: they are the only sequence such that every positive integer can be written uniquely as a sum of non-adjacent terms. This interplay between recurrence relations and notions of legal decompositions holds for other sequences and recurrence rules as well. Below we report on some of the previous work on properties of Generalized Zeckendorf decompositions for certain sequences, and then discuss our new generalizations to two-dimensional sequences. There is now an extensive literature on the subject (see for example \cite{Bes,Bow,Br,Ca,Day,Dem,FGNPT, Fr,GTNP,Ha, Ho, HW, Ke,Mw1,Mw2,Ste1,Ste2} and the references therein).

Lekkerkerker \cite{Lek} proved that the average number of summands in the Zeckendorf decompositions of $m \in [F_n, F_{n + 1})$ is $\frac{n}{\varphi^2 + 1} + O(1) \approx .276n$ as $n \rightarrow \infty$. Later authors extended this to other sequences and higher moments (see the previous references, in particular \cite{Ben, Dem,Dor, DG, LM, LT, Mw2}), proving that given any rules for decompositions there is a unique sequence such that every number has a unique decomposition, and the average number of summands converges to a Gaussian.

To date, most of the sequences studied have been one-dimensional; many that appear to be higher dimensional (such as \cite{CFHMN2,CFHMNPX}) can be converted to one-dimensional sequences. Our goal is to investigate decompositions that are truly higher dimensional. We do so by creating a sequence arising from two-dimensional lattice paths on ordered pairs of positive integers. A legal decomposition in $d$ dimensions will be a finite collection of lattice points for which
\begin{enumerate}
\item{each point is used at most once}, and
\item{if the point $(i_1, i_2, \dots, i_d)$ is included then all subsequent points $(i_1', i_2', \dots, i_d')$ have $i'_j \ < \ i_j$ for all $j \in \{1, 2, \dots, d\}$ (i.e., \emph{all} coordinates must decrease between any two points in the decomposition).}
\end{enumerate}

We call these sequences of points on the $d$-dimensional lattice \textbf{simple jump paths}. In Section \ref{sec:futureWork} we discuss generalizations in which we allow only some of the coordinates to decrease between two consecutive points in the path; this adds combinatorial difficulties. Note that the number we assign to each lattice point depends on how we order the points (unless we are in one dimension). For example, if $d=2$ we can order the points by going along diagonal lines, or $L$-shaped paths. Explicitly, the first approach gives the ordering \be (1,1), \ \ \ (2,1),\ (1,2), \ \ \ (3,1),\ (2,2),\ (1,3), \ \ \ \dots, \ee while the second yields \be (1,1), \ \ \ (2,1),\ (2,2),\ (1,2), \ \ \ (3,1),\ (3,2),\ (3,3),\ (2,3),\ (1,3), \ \ \ \dots. \ee For the purposes of this paper, however, it does not matter which convention we adopt as our results on the distribution in the number of summands of a legal decomposition depend only on the combinatorics of the problem, and not the values assigned to each tuple. We call the labeling attached to any choice a \textbf{Simple Zeckendorf Sequence in $d$ dimensions}, and comment shortly on how this is done.
If $d = 1$ then we denote the sequence as $\{y_a\}^{\infty}_{a = 0}$ and construct it as follows.

\begin{enumerate}

\item{Set $y_1 := 1$.}

\item{Iterate through the natural numbers. If we have constructed the first $k$ terms of our sequence, the $(k+1)$\textsuperscript{th} term is the smallest integer which cannot be written as a sum of terms in the sequence, with each term used at most once.}

\end{enumerate}

Note this sequence is just powers of 2,
\begin{eqnarray}
\begin{array}{ccccccccccc}
1 & 2 & 4 & 8 & 16 & 32 & 64 & 128 & 256 & 512 & \dots,
\end{array}
\label{ZeckendorfDiagonalSequenceSimp1D}
\end{eqnarray}
and a legal decomposition of $n$ is just its binary representation.

If $d = 2$, on the other hand, as remarked above we have choices. We describe the Simple Zeckendorf Diagonal Sequence $\{y_{a, b}\}_{a, b = 0}^{\infty}$; its construction is similar in nature to the $d = 1$ case and proceeds as follows.

\begin{enumerate}
\item{Set $y_{1,1}  :=  1$.}

\item{Iterate through the natural numbers. For each such number, check if any path of numbers in our sequence with a strict leftward and downward movement between each two points sums to the number. If no such path exists, add the number to the sequence so that it is added to the shortest unfilled diagonal moving from the bottom right to the top left.}

\item{If a new diagonal must begin to accommodate a new number, set the value $y_{k, 1}$ to be that number, where $k$ is minimized so that $y_{k, 1}$ has not yet been assigned.}
\end{enumerate}

In \eqref{ZeckendorfDiagonalSequenceSimp2D} we illustrate several diagonals' worth of entries when $d = 2$, where the elements are always added in increasing order. Note that unlike the Fibonacci sequence, we immediately see that we have lost the uniqueness of decompositions (for example, $25$ has two legal decompositions: $20+5$ and $24+1$).
\begin{eqnarray}
\begin{array}{cccccccccc}280 & \cdots & \cdots & \cdots & \cdots & \cdots & \cdots & \cdots & \cdots & \cdots \\157 & 263 & \cdots & \cdots & \cdots & \cdots & \cdots & \cdots & \cdots & \cdots \\84 & 155 & 259 & \cdots & \cdots & \cdots & \cdots & \cdots & \cdots & \cdots \\50 & 82 & 139 & 230 & \cdots & \cdots & \cdots & \cdots & \cdots & \cdots \\28 & 48 & 74 & 123 & 198 & \cdots & \cdots & \cdots & \cdots & \cdots \\14 & 24 & 40 & 66 & 107 & 184 & \cdots & \cdots & \cdots & \cdots \\7 & 12 & 20 & 33 & 59 & 100 & 171 & \cdots & \cdots & \cdots \\3 & 5 & 9 & 17 & 30 & 56 & 93 & 160 & \cdots & \cdots \\1 & 2 & 4 & 8 & 16 & 29 & 54 & 90 & 154 & \cdots \end{array}
\label{ZeckendorfDiagonalSequenceSimp2D}
\end{eqnarray}

Of course, analogous procedures to the one which creates \eqref{ZeckendorfDiagonalSequenceSimp2D} exist for higher dimensions, but the intended illustration is most intuitive in two dimensions. For the same reason as in the $d = 2$ case, there are clearly multiple procedures to generate the higher-dimensional sequences, even if one fixes restrictions on how to choose the summands in as many as $d - 2$ dimensions.

Numerical explorations (see Figure \ref{fig:gaussplots}) suggest that, similarly to other sequences mentioned earlier, the distribution of the number of summands converges to a Gaussian.

\begin{figure}[h] 
\begin{center}
\scalebox{.7}{\includegraphics[width=9cm,height=5.5cm,angle=0]{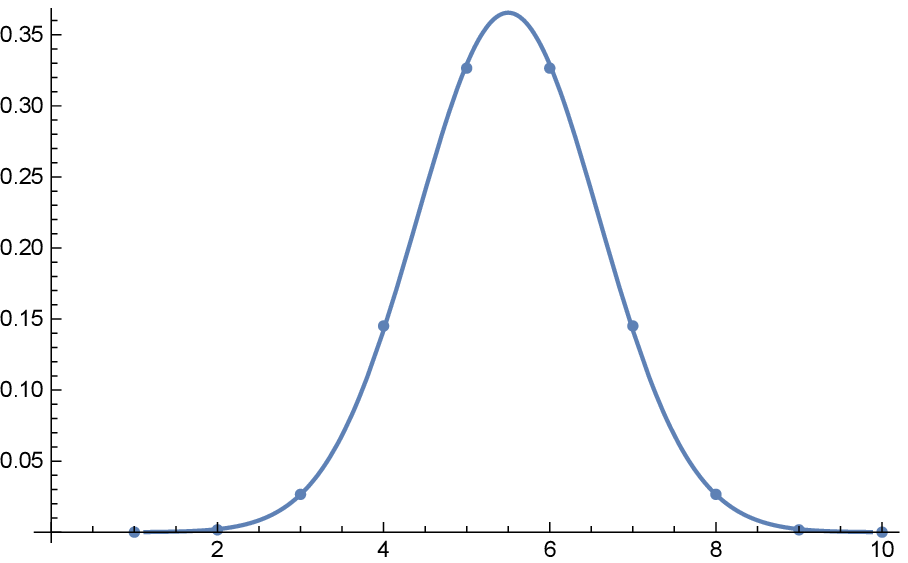}} \ \ \scalebox{.7}{\includegraphics[width=9cm,height=5.5cm,angle=0]{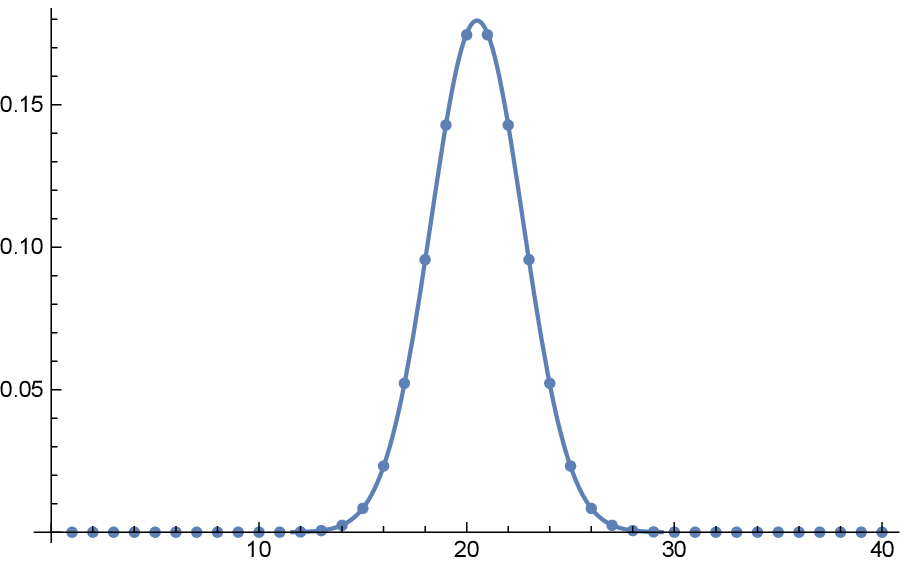}}
\caption{\label{fig:gaussplots} Distribution of the number  simple jump paths of varying lengths versus the best fit Gaussian. Left: Starting at $(10,10)$. Right: Starting at $(40, 40)$. In both cases the horizontal axis is the number of summands and the vertical axis is the probability of obtaining a simple jump path with some number of summands when selecting one from all simple jump paths uniformly at random. }
\end{center}
\end{figure}

Our main result is that as $n\to \infty$, we converge to Gaussian behavior in any number of dimensions.

\begin{thm}{($d$-dimensional Gaussianity)}
\label{ddgauss}
Let $n$ be a positive integer, and consider the distribution of the number of summands among all simple jump paths of dimension $d$ with starting point $(i, i,.....,i)$ where $ 1 \leq  i \leq n$, and each distribution represents a (not necessarily unique) decomposition of some positive number. This distribution converges to a Gaussian as $n \rightarrow  \infty$.
\end{thm}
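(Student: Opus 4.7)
The plan is to reduce the problem to a clean combinatorial formula for the path count and then extract the Gaussian shape via Stirling's approximation. First I would count the paths directly: a simple jump path of length $k$ starting at $(n,\dots,n)$ is determined, independently in each of the $d$ coordinates, by a strictly decreasing sequence $n > c_{2} > \cdots > c_{k} \geq 1$. Since the order is forced, such a sequence is just a choice of the $k-1$ continuing values from $\{1,\dots,n-1\}$, giving $\binom{n-1}{k-1}$ options per coordinate and $\binom{n-1}{k-1}^{d}$ paths in total. Hence, letting $K$ be the number of summands of a uniformly random path,
\[\Pr[K=k] \;=\; \frac{\binom{n-1}{k-1}^{d}}{S_d(n-1)}, \qquad S_d(m) \;:=\; \sum_{j=0}^{m}\binom{m}{j}^{d}.\]
Because $S_d(m)$ grows geometrically in $m$, pooling over all starting points $(i,\dots,i)$ with $i \leq n$ is dominated by the $i=n$ term and yields the same limiting law.

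Next I would apply Stirling's formula in its local-CLT form. Writing $m=n-1$ and $x=j-m/2$, uniformly on the central window $|x| \leq m^{2/3}$ one has
\[\binom{m}{j} \;=\; \binom{m}{\lfloor m/2\rfloor}\exp\!\left(-\frac{2x^{2}}{m}\right)(1+o(1)),\]
and raising to the $d$-th power,
\[\binom{m}{j}^{d} \;=\; \binom{m}{\lfloor m/2\rfloor}^{d}\exp\!\left(-\frac{2dx^{2}}{m}\right)(1+o(1)).\]
Comparing with $C\exp(-x^{2}/(2\sigma^{2}))$ reads off a Gaussian profile of mean $m/2$ and variance $m/(4d)$.

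The third step is normalization and tail control. Outside the central window, $\binom{m}{j}^{d}$ is bounded by $\binom{m}{\lfloor m/2\rfloor}^{d}\exp(-\Omega(m^{1/3}))$, which is negligible after summing. Replacing the bulk sum by the Gaussian integral $\int\exp(-2dx^{2}/m)\,dx = \sqrt{\pi m/(2d)}$ gives
\[S_d(m) \;=\; \binom{m}{\lfloor m/2\rfloor}^{d}\sqrt{\tfrac{\pi m}{2d}}\,(1+o(1)),\]
and consequently
\[\Pr[K-1=j] \;\sim\; \sqrt{\tfrac{2d}{\pi m}}\,\exp\!\left(-\tfrac{2dx^{2}}{m}\right).\]
This is a local central limit theorem, from which convergence of $(K-(n+1)/2)\big/\sqrt{(n-1)/(4d)}$ to $N(0,1)$ follows immediately, with explicit mean $(n+1)/2$ and variance $(n-1)/(4d)$.

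The hardest step will be the quantitative bookkeeping in the Stirling expansion: one must verify the error estimate uniformly across the entire window $|x|\leq m^{2/3}$ and show that the omitted $j$'s contribute an $o(1)$ fraction of $S_d(m)$. Both are standard ingredients of the Binomial local CLT, and in the write-up I would either invoke them or supply short self-contained Stirling bounds. A conceptually cleaner variant, if desired, is to recognize the mass function above as the conditional distribution of $X_{1}$ given $X_{1}=\cdots=X_{d}$ for i.i.d.\ $X_i\sim\mathrm{Binomial}(m,1/2)$; then the limit follows from the univariate local CLT together with the elementary fact that conditioning $d$ i.i.d.\ $N(0,1)$ variables on being equal yields an $N(0,1/d)$ distribution.
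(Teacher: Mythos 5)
Your proposal is correct and follows essentially the same route as the paper: the product count $\binom{n-1}{k-1}^{d}$ for paths of length $k$, a Stirling/local-CLT expansion of $\binom{m}{j}^{d}$ on a central window with tail control outside it, and recovery of the normalizing sum $S_d(m)$ from the requirement that the probabilities sum to $1$, giving a Gaussian with mean $\approx n/2$ and variance $\approx n/(4d)$. The only quibble is your claim that pooling over starting points is ``dominated'' by the $i=n$ term---the starting point $i=n-j$ actually carries a constant fraction $\sim 2^{-dj}$ of the total mass---but since those components are Gaussians whose means differ by $O(1)\ll \sqrt{n}$, the mixture has the same limit and your conclusion stands.
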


In Section \ref{sec:simpleJumpPaths} we motivate our problem further, explore the notion of a simple jump path in more depth, and prove some needed lemmas. Then, we prove Theorem \ref{ddgauss} in Section \ref{sec:gaussianDDim}. The result is just the Central Limit Theorem for a binomial random variable if $d=1$. If $d=2$ it can be proved directly through combinatorial identities, but for larger $d$ the combinatorial lemmas do not generalize and we are forced to resort to analytic techniques. We show that the functional dependence is that of a Gaussian, and thus as the probabilities must sum to 1 the normalization constant, which depends on the number of paths, must have a certain asymptotic formula. Thus, as an immediate consequence, we obtain new proofs for the asymptotic number of paths (the approach mentioned on the OEIS uses generating functions and expansions). We end with a discussion of future work and generalizations of the simple jump paths.

\section{Properties of Simple Jump Paths}\label{sec:simpleJumpPaths}

We first set some notation for our simple jump paths. We have walks in $d$ dimensions starting at some initial point $(a_1, a_2, \dots, a_d)$ with each $a_j  > 0$, and ending at the origin $(0, 0, \dots, 0)$. Note that our simple jump paths must always have movement in all dimensions at each step. We are just adding one extra point, at the origin, and saying every path must end there. Note that as we always change all of the indices during a step, we never include a point where only some of the coordinates are zero, and thus there is no issue in adding one extra point and requiring all paths to end at the origin.

Our walks are sequences of points on the lattice grid with positive indices or the origin, and we refer to movements between two such consecutive points as \textbf{steps}. Thus a simple jump path is a walk where each step has a strict movement in all $d$ dimensions. More formally, a simple jump path of length $k$ starting at $(a_1, a_2, \dots, a_d)$ is a sequence of points $\{(x_{i, 1}, \dots, x_{i, d})\}^{k}_{i = 0}$ where the following hold:
\begin{itemize}
\item $(x_{0, 1}, \dots, x_{0, d}) \ = \ (a_1, \dots, a_d)$,

\item $(x_{k, 1}, \dots, x_{k, d}) \ = \ (0, \dots, 0)$, and

\item for each $i \in \{1, \dots, k - 1\}$ and $j \in \{1, \dots, d\}$, $x_{i, j} \ > \ x_{i + 1, j}$.

\end{itemize}

For a fixed $d$ and any choice of starting point $(n, n, \dots, n) \in \R^d$, we let $s_d(n)$ denote the number of simple jump paths from $(n, n, \dots, n)$ to the origin, and $t_d(k, n)$ the subset of these paths with exactly $k$ steps. As we must reach the origin, every path has at least 1 step, the maximum number of steps is $n$, and
\begin{equation} \label{simpleJumpPathPartitionByNumSteps}
s_d(n) \ = \ \sum_{k = 1}^{n} t_d(k, n).
\end{equation}

We now determine $t_d(k, n)$. In one dimension we have $t_d(k, n) = \ncr{n-1}{k-1}$, as we must choose exactly $k-1$ of the first $n-1$ terms (we must choose the $n$\textsuperscript{th} term as well as the origin, and thus choosing $k-1$ additional places ensures their are exactly $k$ steps). The generalization to higher dimensions is immediate as we are looking at simple paths, and thus there is movement in each dimension in each step; this is why we restrict ourselves to simple paths, as in the general case we do not have tractable formulas like the one below.

\begin{lemma}
\label{lem:enumerateSimpleJumpPaths}
For $a_1, \dots, a_d$ positive integers let $t_d(k; a_1,\dots,a_d)$ denote the number of simple paths of length $k$ starting at $(a_1, \dots, a_d)$ and ending at $(0, \dots, 0)$. Then for $1 \leq k \le \min(a_1,\dots, a_d)$,
\begin{eqnarray}
t_d(k; a_1, \dots, a_d) \ = \ncr{a_1 - 1}{k - 1}\ncr{a_2 - 1}{k - 1} \cdots \ncr{a_d - 1}{k - 1};
\end{eqnarray} if $a_1 = \cdots = a_d = n$ we write $t_d(k,n)$ for $t_d(k;a_1, \dots, a_d)$. We have
\begin{eqnarray}
s_d(n) \ = \ \sum_{k=1}^n t_d(k,n),
\end{eqnarray} and $s_1(n) = 2^{n-1}$, $s_2(n) = \ncr{2n - 2}{n - 1}$ (for higher $d$ there are no longer simple closed form expressions\footnote{We will find excellent approximations for large $n$ and fixed $d$ later.}). 
\end{lemma}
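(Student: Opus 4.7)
The proof is a pure combinatorial decoupling argument, and the cleanest route is to notice that the ``simple'' condition, which forces \emph{every} coordinate to strictly decrease at each step, makes the coordinates independent of one another.

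My plan is as follows. First, I would fix a length $k$ and a starting point $(a_1,\dots,a_d)$, and examine what a simple jump path of length $k$ looks like coordinate by coordinate. Since each step strictly decreases every coordinate, projecting onto the $j$-th coordinate gives a strictly decreasing sequence
\[
a_j \ = \ x_{0,j} \ > \ x_{1,j} \ > \ \cdots \ > \ x_{k-1,j} \ > \ x_{k,j} \ = \ 0.
\]
The endpoints $a_j$ and $0$ are forced, so such a sequence is in bijection with a choice of the $k-1$ intermediate values $x_{1,j},\dots,x_{k-1,j}$ from the set $\{1,2,\dots,a_j-1\}$; there are $\binom{a_j-1}{k-1}$ such choices, which is why we need $k-1\le a_j-1$, i.e.\ $k\le \min(a_1,\dots,a_d)$.

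Next I would argue that the $d$ coordinate-projections can be chosen independently. Given any $d$ strictly decreasing sequences (one per coordinate, with the correct endpoints), we may assemble them step by step into a single sequence of points $\{(x_{i,1},\dots,x_{i,d})\}_{i=0}^k$; because each coordinate strictly decreases from step $i$ to step $i+1$ by construction, the assembled sequence automatically satisfies the simple jump path condition, and the assembly is clearly a bijection. Multiplying over $j=1,\dots,d$ then yields
\[
t_d(k;a_1,\dots,a_d) \ = \ \prod_{j=1}^{d}\binom{a_j-1}{k-1},
\]
which specializes to the claimed formula when $a_1=\cdots=a_d=n$.

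For the closed forms, $s_d(n)=\sum_{k=1}^n t_d(k,n)$ is just the partition statement \eqref{simpleJumpPathPartitionByNumSteps} applied to the product formula. For $d=1$, reindexing $j=k-1$ gives
\[
s_1(n) \ = \ \sum_{k=1}^{n}\binom{n-1}{k-1} \ = \ \sum_{j=0}^{n-1}\binom{n-1}{j} \ = \ 2^{n-1}.
\]
For $d=2$, the same reindexing gives $s_2(n)=\sum_{j=0}^{n-1}\binom{n-1}{j}^2$, which by the Chu--Vandermonde identity equals $\binom{2n-2}{n-1}$. I do not expect any real obstacles: the only subtlety is checking that assembling the $d$ coordinate sequences does not reintroduce any hidden constraint, which the argument above handles by observing that the simple jump condition is itself a coordinatewise condition.
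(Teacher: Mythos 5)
Your proposal is correct and follows essentially the same route as the paper: the paper likewise counts each coordinate's strictly decreasing projection via $\binom{a_j-1}{k-1}$ (the one-dimensional case) and multiplies over coordinates, citing the binomial-theorem and Vandermonde identities for $s_1(n)=2^{n-1}$ and $s_2(n)=\binom{2n-2}{n-1}$. You have simply spelled out the bijection and the identities that the paper treats as immediate.
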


The proof is an immediate, repeated application of the one-dimensional result, with the two formulas (for $s_1(n)$ and $s_2(n)$) being well-known binomial identities (see for example \cite{Mil}).

\section{Gaussianity in $d$-Dimensional Lattices}\label{sec:gaussianDDim}

\subsection{Mean and Variance}\label{sec:meanVar}

To prove Theorem \ref{ddgauss}, we start by determining the density, $p_d(k, n)$, for the number of simple jump paths of length $k$ starting at $(n, \dots, n)$:
\begin{eqnarray}
p_d(k, n) \ := \ \frac{t_d(k, n)}{s_d(n)}. \label{dDimDensityCondensed}
\end{eqnarray}
Much, though not all, of the proof when $d = 1$ carries over to general $d$. We therefore concentrate on $d = 1$ initially and then remark on what issues arise when we generalize, and discuss the resolution of these problems.

We begin by determining the mean and standard deviation. The analysis for the mean holds for all $d$, but the combinatorial argument for the variance requires $d \le 2$. Due to the presence of $n-1$ in the formula for $t_d(k,n)$, we work with $n+1$ below to simplify some of the algebra.

\begin{lem}\label{simpleMeanStdDev} Consider all simple jump paths from $(n+1, \dots, n+1)$ to the origin in $d$-dimensions. If $K$ is the random variable denoting the number of steps in each path, then its mean $\mu_d(n + 1)$ and standard deviation $\sigma_d(n + 1)$ are \begin{eqnarray}
\mu_d(n + 1) & \ = \ & \frac{1}{2}n + 1 \label{simpleSquareLatticeMean} \end{eqnarray} and
\begin{eqnarray}\label{simpleSquareLatticeStdDev}
\sigma_1(n + 1)\ = \ \frac{\sqrt{n}}{2}, \ \ \ \ \ \sigma_2(n+1) \ = \ \frac{n}{2\sqrt{2n-1}} \ \approx \ \frac{\sqrt{n}}{2\sqrt{2}}.
\end{eqnarray} Further, we have \begin{equation} \sigma_d(n+1) \ \le \ \sigma_1(n+1) \ \le \ \sqrt{n}/2. \end{equation}
\end{lem}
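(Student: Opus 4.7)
My plan is to work from the explicit density $p_d(k,n+1) = \binom{n}{k-1}^d/S_d$ with $S_d := \sum_{j=0}^n \binom{n}{j}^d$, which Lemma~\ref{lem:enumerateSimpleJumpPaths} hands us after the shift $n \mapsto n+1$. Throughout I substitute $j = k-1$ so that $j$ ranges over $\{0,1,\dots,n\}$, with weights $\binom{n}{j}^d$ that are symmetric about $n/2$. This symmetry will do most of the work.

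For the mean, the symmetry $\binom{n}{j}^d = \binom{n}{n-j}^d$ immediately gives $\sum_j j\binom{n}{j}^d = \tfrac{n}{2}S_d$ for every $d$, hence $\mu_d(n+1) = E[j]+1 = \tfrac{n}{2}+1$ with no further computation. For $d=1$, $S_1 = 2^n$ identifies $K-1$ as $\Bin(n,1/2)$, yielding $\sigma_1(n+1)^2 = n/4$. For $d=2$, I would apply Vandermonde ($S_2 = \binom{2n}{n}$) together with $j\binom{n}{j} = n\binom{n-1}{j-1}$ to rewrite $\sum_j j^2\binom{n}{j}^2 = n^2\binom{2n-2}{n-1}$. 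The ratio $\binom{2n-2}{n-1}/\binom{2n}{n}$ telescopes to $n/(2(2n-1))$, and after subtracting the square of the mean this produces $\sigma_2(n+1)^2 = n^2/(4(2n-1))$, with the stated asymptotic $\sqrt{n}/(2\sqrt 2)$.

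The main obstacle is the comparison $\sigma_d(n+1) \le \sigma_1(n+1)$ for general $d$, because no closed form for $S_d$ is available when $d \ge 3$. I plan to prove it by a symmetrization inequality: cross-multiplying the desired bound and then symmetrizing the resulting double sum under $i \leftrightarrow j$ reduces it to showing
\[
\sum_{i,j}\binom{n}{i}\binom{n}{j}\!\left[\binom{n}{j}^{d-1}-\binom{n}{i}^{d-1}\right]\!\left[(j-\tfrac{n}{2})^2-(i-\tfrac{n}{2})^2\right] \;\le\; 0.
\]
Since $\binom{n}{j}$ is symmetric and unimodal with peak at $n/2$, the first bracket has the opposite sign of $|j-n/2|-|i-n/2|$ while the second has the same sign, so every summand is non-positive. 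Combined with the explicit $\sigma_1(n+1) = \sqrt{n}/2$, this delivers the full chain $\sigma_d(n+1) \le \sigma_1(n+1) \le \sqrt{n}/2$.

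I expect this symmetrization step to be the main conceptual content: everything else is routine binomial bookkeeping once the density is in hand, but this last inequality requires spotting that the comparison follows from the joint unimodality/symmetry of $\binom{n}{j}$ and $(j-n/2)^2$ about $n/2$, rather than from an explicit evaluation of $S_d$.
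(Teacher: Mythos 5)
Your proposal is correct, and most of it coincides with what the paper actually does: the mean via the $j \leftrightarrow n-j$ pairing, $\sigma_1$ from the $\Bin(n,1/2)$ identification, and $\sigma_2$ via exactly the identity $\sum_k k^2\binom{n}{k}^2 = n^2\binom{2n-2}{n-1}$ together with $\sum_k\binom{n}{k}^2=\binom{2n}{n}$ are the computations carried out in the paper's Appendix \ref{sec:derivMeanStdDev}. Where you genuinely diverge is the comparison $\sigma_d(n+1)\le\sigma_1(n+1)$ for general $d$. The paper argues this by a peakedness heuristic: since the weights $\binom{n}{\kappa}^d$ are symmetric and unimodal, raising them to a higher power concentrates more of the normalized mass near $n/2$, so the probability of lying within any distance $g$ of the mean is nondecreasing in $d$ and the variance is maximized at $d=1$; as written this step is quite terse (it only records the monotonicity of $p_d(\kappa)$ away from the mean and then asserts the concentration claim). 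Your symmetrization argument is a rigorous, self-contained substitute: cross-multiplying $\sigma_d^2\le\sigma_1^2$ and antisymmetrizing in $(i,j)$ reduces the claim to the pointwise inequality
\begin{equation*}
\binom{n}{i}\binom{n}{j}\left[\binom{n}{j}^{d-1}-\binom{n}{i}^{d-1}\right]\left[\left(j-\tfrac{n}{2}\right)^2-\left(i-\tfrac{n}{2}\right)^2\right]\ \le\ 0,
\end{equation*}
which holds because $\binom{n}{j}$ is a strictly decreasing function of $|j-n/2|$, so the two brackets always have opposite signs (and vanish when $d=1$). This is a Chebyshev-sum/correlation-inequality argument that buys you a complete proof of the comparison without any closed form for $S_d=\sum_j\binom{n}{j}^d$, at the cost of being slightly less transparent probabilistically than the paper's ``more peaked'' intuition; conversely, the paper's route is shorter but leans on an unproved stochastic-dominance step that your computation in effect supplies.
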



\begin{proof} The results for $d=1$ are well known, as we have a binomial random variable. For $d=2$ one can compute the mean and the variance by combinatorial arguments (see Appendix \ref{sec:derivMeanStdDev}); unfortunately while these can be generalized to give the mean for any $d$ they do not generalize for the variance.

Because we must end at the origin, note each path must have length at least 1. Thus instead of studying the number of paths of length $k \in \{1, \dots, n+1\}$ we instead study the number of paths of length $\kappa \in \{0, \dots, n\}$ and then add 1 to obtain the mean (there is no need to add 1 for the variance, as the variance of $K$ and $K-1$ are the same).

As \be t_d(k;n+1) \ = \ \frac{\ncr{n}{k}^d}{s_d(n+1)}, \ee the symmetry of the binomial coefficients about $n/2$ implies the mean of $K-1$ is $n/2$. All that remains is to prove the variance bound for $d \ge 2$. Note that the variance of $K-1$ is \be \sigma_d(n+1) \ = \ \sum_{\kappa=0}^{n} \left(\kappa - n/2\right)^2 \frac{\ncr{n}{\kappa}^d}{s_d(n+1)}. \ee By symmetry it suffices to investigate $\kappa \ge n/2$. Since the binomial coefficients are strictly decreasing as we move further from the mean, for such $\kappa$ we find that \be \frac{p_d(\kappa)}{p_d(\kappa+1)} \ = \ \frac{\ncr{n}{\kappa}^d}{\ncr{n}{\kappa+1}^d} \ \ge \ 1,\ee and thus for every $g > 0$ we see that the probability of $K-1$ being within $g$ of the mean increases as $d$ increases. Thus the variance is smallest at $d=1$, completing the proof. \end{proof}



Next, we show with high probability that $K$ is close to the mean.

\begin{lem}\label{lem:chebyshevbound} Consider all simple jump paths from $(n+1, \dots, n+1)$ to the origin in $d$-dimensions. If $K$ is the random variable denoting the number of steps in each path, then the probability that $K$ is at least $n^{\epsilon} n^{1/2}/2$ from the mean is at most $n^{-2\epsilon}$. \end{lem}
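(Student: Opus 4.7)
The plan is to apply Chebyshev's inequality using the variance bound already established in Lemma \ref{simpleMeanStdDev}. Recall that lemma gives us $\sigma_d(n+1) \le \sigma_1(n+1) \le \sqrt{n}/2$ for every dimension $d \ge 1$, which is exactly the handle we need to convert a tail bound measured in units of $\sigma_d$ into a tail bound measured in absolute distance from the mean.

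First I would invoke Chebyshev's inequality in its standard form: for any real random variable $K$ with finite variance and any $t > 0$,
\begin{equation*}
\Pr\!\left(|K - \mu_d(n+1)| \ \ge \ t\, \sigma_d(n+1)\right) \ \le \ \frac{1}{t^2}.
\end{equation*}
Next I would choose $t$ so that $t\,\sigma_d(n+1) \ge n^{\epsilon} n^{1/2}/2$ and deduce the advertised bound. Since $\sigma_d(n+1) \le \sqrt{n}/2$, taking $t := n^{\epsilon}$ already gives $t\,\sigma_d(n+1) \le n^{\epsilon}\sqrt{n}/2$, and because enlarging a deviation threshold can only decrease the probability of exceeding it, we conclude
\begin{equation*}
\Pr\!\left(|K - \mu_d(n+1)| \ \ge \ n^{\epsilon} n^{1/2}/2\right) \ \le \ \Pr\!\left(|K - \mu_d(n+1)| \ \ge \ n^{\epsilon}\sigma_d(n+1)\right) \ \le \ n^{-2\epsilon}.
\end{equation*}

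There is essentially no main obstacle here: the whole argument is a one-line application of Chebyshev once the uniform variance bound $\sigma_d(n+1) \le \sqrt{n}/2$ from Lemma \ref{simpleMeanStdDev} is in hand. The only subtlety worth flagging is the dimension-independence of the bound: this is precisely what the monotonicity argument in Lemma \ref{simpleMeanStdDev} (showing that the variance in dimension $d$ is no larger than the binomial variance in dimension $1$) was set up to deliver, so no additional combinatorial work is required here and the lemma follows immediately for all $d$.
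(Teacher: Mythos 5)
Your proof is correct and follows essentially the same route as the paper: apply Chebyshev's inequality at threshold $n^{\epsilon}\sigma_d(n+1)$ and then use the bound $\sigma_d(n+1) \le \sqrt{n}/2$ from Lemma \ref{simpleMeanStdDev} to pass to the absolute threshold $n^{\epsilon}n^{1/2}/2$, noting the containment of events. (The only quibble is the planning sentence asking for $t\,\sigma_d(n+1) \ge n^{\epsilon}n^{1/2}/2$, which has the inequality backwards, but your concluding chain uses the correct direction.)
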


\begin{proof} By Chebyshev's Inequality, \be {\rm Prob}\left(|K - (n/2 + 1)| \ \ge\ n^\epsilon \sigma_d(n+1)\right) \ \ \le\  \ \frac1{n^{2\epsilon}}. \ee As $\sigma_d(n+1) \le n^{1/2}/2$ by Lemma \ref{simpleMeanStdDev}, we only decrease the probability on the left if we replace $\sigma_d(n+1)$ with $n^{1/2}/2$, and thus the claim follows. \end{proof}

One important consequence of the above lemma is that if we write $k$ as $\mu_{d}(n + 1) + \ell n^{1/2}/2$, then with probability tending to 1 we may assume $|\ell| \le n^\epsilon$.


\subsection{Gaussianity}
\label{sec:GaussianityProof}

The proof of Theorem \ref{ddgauss} in general proceeds similarly to the $d=1$ case. For $d \le 2$ we have explicit formulas for both the variance and $s_d(n+1)$, which simplify the proof. For general $d$ we show that the resulting distribution has the same functional form as a Gaussian, and from this we obtain asymptotics for both the variance and the number of paths.

\begin{proof}[Proof of Theorem \ref{ddgauss}] From Lemma \ref{lem:chebyshevbound}, if we write
\begin{equation} \label{meanStdDevApproxOfK}
k \ = \ \mu_{d}(n + 1) + \ell n^{1/2}/2
\end{equation} then the probability of $|\ell|$ being at least $n^{1/9}$ is at most $n^{2/9}$, so in the arguments below we assume $|\ell| \le n^{1/9}$. In particular, this means that both $k$ and $n-k$ are close to $n/2$ with probability tending to 1 as $n\to\infty$. We are using $n^{1/2}/2$ and not $\sigma_d(n+1)$ as this way a quantity below will perfectly match the $d=1$ case.

For $m$ large, Stirling's Formula states that
\begin{equation} \label{stirlingCite}
m! \ = \ m^m e^{-m} \sqrt{2\pi m} \left(1 + O\left(\frac1{m}\right)\right).
\end{equation}
Thus\begin{eqnarray}\label{equation:StirlingExpansion}
p_d(k, n+1) & \ = \ & \frac{\ncr{n}{k}^d}{s_d(n+1)} \ = \ \frac1{s_d(n+1)} \left(\frac{n!}{k!(n-k)!}\right)^d \nonumber\\
& \ = \ & \frac1{s_d(n+1)}\left(\frac{\sqrt{2\pi n} n^n}{\sqrt{4\pi^2 k (n-k)} k^k (n-k)^{n-k}} \cdot \frac{\left(1 + O\left(\frac{1}{n}\right)\right)}{\left(1+O\left(\frac{1}{n-k}\right)\right)\left(1+O\left(\frac{1}{k}\right)\right)}\right)^d,\nonumber\\
\end{eqnarray} and the ratio of the big-Oh terms is $1 + O(1/n)$ since $k$ and $n-k$ are approximately $n/2$ (note the big-Oh constant here is allowed to depend on $d$, which is fixed).

We now turn to the other part of the above expression. If we divide the rest of the quantity in parentheses by $2^n$ then we have the probability in 1-dimension, whose analysis is well-known; thus \be p_d(k, n+1) \ = \ \frac{2^{nd} n^{d/2}}{s_d(n+1)} \left(\frac{n^n}{2^n k^k (n-k)^{n-k} \sqrt{2\pi k(n-k)}} \right)^d \cdot \left(1 + O(1/n)\right). \ee The quantity to the $d$-th power converges (up to the normalization factor) to a Gaussian by the Central Limit Theorem for a binomial random variable; for completeness we sketch the proof.

Using $n, n-k$ are close to $n/2$, we find
\begin{eqnarray} \label{1DGaussMainTermExpandA}
p_{{\rm main}, 1}(k) & \ := \ & \frac{n^n}{2^n k^k (n-k)^{n-k} \sqrt{2\pi k(n-k)}} \nonumber\\
 &= & \frac{1}{\sqrt{\frac{1}{2}\pi n^2}} \cdot \frac{1}{\left(1-\frac{\frac{\ell\sqrt n}{2} }{n/2}\right)^{n/2-\frac{\ell\sqrt n}{2}+\frac12}\left(1+\frac{\frac{\ell\sqrt n}{2}}{n/2}\right)^{n/2+\frac{\ell\sqrt n}{2}+\frac12}}.
\end{eqnarray}

Let $q_n$ be the denominator of the second fraction above. We approximate $\log(q_n)$ and then exponentiate to estimate $q_n$. As $|\ell| \le n^{1/9}$, when we take the logarithms of the terms in $q_n$ only the first two terms in the Taylor expansion of $\log(1+u)$ contribute as $n\to\infty$. Thus
\begin{eqnarray} \label{gaussFracExpanC}
\log q_n
& \ = \ & \left(\frac{n}{2} - \frac{\ell\sqrt n}{2} + \frac{1}{2}\right)\left(-\frac{\ell}{\sqrt n} - \frac{\ell^2}{2n}+ O\left(\frac{\ell^3}{n^{3/2}}\right)\right) \nonumber\\ & & \ \ \ \ + \ \left(\frac{n}{2} + \frac{\ell\sqrt n}{2} + \frac{1}{2}\right)\left(\frac{\ell}{\sqrt n} - \frac{\ell^2}{2n}+ O\left(\frac{\ell^3}{n^{3/2}}\right)\right)\nonumber\\
 &= & \frac{\ell^2}{2}+O\left(n\cdot \frac{n^{1/3}}{n^{3/2}} - \frac{\ell^2}{2n}\right) \ = \ \frac{\ell^2}{2} + O\left(n^{-1/6}\right),
\end{eqnarray} which implies (since $k = \mu_d(n+1) + \ell \sqrt{n}/2$)
\begin{equation}\label{gaussFracExpanD}
q_m \ = \ e^{\frac{(k-\mu_d(n+1))^2}{n/2}} e^{O(n^{-1/6})}.
\end{equation}
Thus collecting our expansions yields, for $|\ell| \le n^{1/9}$,
\begin{equation}
p_d(k, n+1) \ = \ \frac{2^{nd} n^{d/2}}{s_d(n+1)(\pi n^2 /2)^{d/2}}\ e^{-\frac{d(k-\mu_d(n+1))^2}{n/2}} \cdot e^{O(n^{-1/6})}.
\end{equation} Note the second exponential is negligible as $n\to\infty$, and the first exponential is that of a Gaussian with mean $\mu_d(n+1)$ and variance $\sigma_d(n+1)^2 = n/4d$. As this is a probability distribution it must sum to 1 (the terms with $|\ell|$ large contribute negligibly in the limit), and thus $2^{nd} / (s_d(n+1) (\pi n /2)^{d/2})$ must converge to the normalization constant of this Gaussian, which is $1/\sqrt{2\pi s_d(n+1)^2}$. In particular, we obtain\footnote{One can check this asymptotic by computing $s_d(n+1)$ for various $d$ and looking up the resulting sequences on the OEIS, which agree; for example, see the entry A182421 for the sequence when $d=7$.} \begin{equation} s_d(n+1) \ \sim \ \frac{2^{nd}  n^{d/2}}{(\pi n^2 /2)^{d/2}} \cdot \sqrt{2 \pi n/4d} \ = \  2^{nd} \left(\frac{\pi n}{2}\right)^{-\frac{d}{2}+\frac12} d^{-1/2}. \end{equation}
\end{proof}

\section{Future Work and Concluding Remarks}\label{sec:futureWork}

We could also consider the \textbf{Compound Zeckendorf Diagonal Sequence in $d$ dimensions}, which is constructed in a similar way to \eqref{ZeckendorfDiagonalSequenceSimp1D} and \eqref{ZeckendorfDiagonalSequenceSimp2D}, but allows more paths to be legal (explicitly, each step is no longer required to move in all of the dimensions). While the $d \ = \ 1$ Compound Zeckendorf Diagonal Sequence is the same as the simple one, the two notions of paths give rise to different sequences when $d \ = \ 2$. In that case, the Compound Zeckendorf Diagonal Sequence is denoted $\{z_{a, b}\}^{\infty}_{a \ = \ 0, b \ = \ 0}$, and is constructed as follows.
\begin{enumerate}
\item{Set $z_{1,1} \ := \ 1$.}

\item{Iterate through the natural numbers. For each such number, check if any path of distinct numbers without upward or rightward movements sums to the number. If no such path exists, add the number to the sequence so that it is added to the shortest unfilled diagonal moving from the bottom right to the top left.}

\item{If a new diagonal must begin to accommodate a new number, set the value $z_{k, 1}$ to be that number, where $k$ is minimized so that $z_{k, 1}$ has not yet been assigned.}
\end{enumerate}

The difference between this and the Simple Zeckendorf Diagonal Sequence is that we now allow movement in just one direction. This greatly complicates the combinatorial analysis because now the simultaneous movements in different dimensions depend on each other. In particular, if a step contains a movement in one direction, it no longer needs to contain a movement in other directions to be regarded as a legal step. In \eqref{ZeckendorfDiagonalSequenceComp} we illustrate several diagonals' worth of entries, where the elements are always added in increasing order.
\begin{eqnarray}
\begin{array}{cccccccccc}6992 & \cdots & \cdots & \cdots & \cdots & \cdots & \cdots & \cdots & \cdots \\2200 & 6054 & \cdots & \cdots & \cdots & \cdots & \cdots & \cdots & \cdots & \cdots \\954 & 2182 & 5328 & \cdots & \cdots & \cdots & \cdots & \cdots & \cdots & \cdots \\364 & 908 & 2008 & 5100 & \cdots & \cdots & \cdots & \cdots & \cdots & \cdots \\138 & 342 & 862 & 1522 & 4966 & \cdots & \cdots & \cdots & \cdots & \cdots \\44 & 112 & 296 & 520 & 1146 & 2952 & \cdots & \cdots & \cdots & \cdots \\16 & 38 & 94 & 184 & 476 & 1102 & 2630 & \cdots & \cdots & \cdots \\4 & 10 & 22 & 56 & 168 & 370 & 1052 & 2592 & \cdots & \cdots \\1 & 2 & 6 & 18 & 46 & 140 & 366 & 1042 & 2270 & \cdots\end{array} \label{ZeckendorfDiagonalSequenceComp}
\end{eqnarray}

Just as in \eqref{ZeckendorfDiagonalSequenceSimp2D}, uniqueness of decompositions does not hold in the compound case. For instance, $112 + 38 + 10$ and $140 + 18 + 2$ are both legal decompositions of $160$ in \eqref{ZeckendorfDiagonalSequenceComp}. Moreover, just like the Simple Zeckendorf Diagonal Sequences \eqref{ZeckendorfDiagonalSequenceSimp1D} and \eqref{ZeckendorfDiagonalSequenceSimp2D}, Compound Zeckendorf Diagonal Sequences can be built in higher dimensions with multiple ways of formulating how to add terms to the sequence.

Many of the articles in the literature use combinatorial methods and manipulations of binomial coefficients to obtain similar results (see, for instance, \cite{Eg, Len, Mw2}). Thus a question worth future study is to extend the combinatorial variance calculation to $d$ dimensions (see Lemma \ref{2DimStdDev}).

Finally, similar to \cite{Bow,Ko} and related work, we can investigate the distribution of gaps between summands in legal paths. One can readily obtain explicit combinatorial formulas for the probability of a given gap; the question is whether or not nice limits exist in this case as they do for the one-dimensional recurrences previously studied.



\appendix
\section{Derivation of Mean and Standard Deviation for Simple Jump Paths}\label{sec:derivMeanStdDev}
\begin{lemma}[Mean for Simple Jump Path Distribution]
If $\mu_d(i)$ denotes the mean number of steps in a $d$-dimensional simple jump path from $(i, i, \dots, i)$ to the origin, then
\begin{eqnarray}
\mu_d(n + 1) \ = \ \frac{1}{2}n + 1.\label{simpledDimLatticeMean}
\end{eqnarray}
\end{lemma}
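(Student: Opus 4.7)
The plan is to reduce this to the symmetry of binomial coefficients about $n/2$. By Lemma \ref{lem:enumerateSimpleJumpPaths}, the number of simple jump paths of length $k$ from $(n+1,\dots,n+1)$ to the origin is $t_d(k,n+1) = \binom{n}{k-1}^d$, and summing over $k$ gives $s_d(n+1) = \sum_{j=0}^n \binom{n}{j}^d$ after the reindexing $j = k-1$. So by definition of the mean,
\[
\mu_d(n+1) \;=\; \frac{\sum_{k=1}^{n+1} k \binom{n}{k-1}^d}{\sum_{k=1}^{n+1} \binom{n}{k-1}^d} \;=\; 1 \,+\, \frac{\sum_{j=0}^n j\binom{n}{j}^d}{\sum_{j=0}^n \binom{n}{j}^d},
\]
and the task is reduced to showing that the second fraction equals $n/2$.

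For that I would invoke the identity $\binom{n}{j} = \binom{n}{n-j}$. Substituting $j \mapsto n-j$ in the numerator yields
\[
\sum_{j=0}^n j\binom{n}{j}^d \;=\; \sum_{j=0}^n (n-j)\binom{n}{n-j}^d \;=\; n\sum_{j=0}^n \binom{n}{j}^d \;-\; \sum_{j=0}^n j\binom{n}{j}^d.
\]
Rearranging gives $2\sum_j j\binom{n}{j}^d = n \sum_j \binom{n}{j}^d$, so the ratio equals $n/2$ and hence $\mu_d(n+1) = 1 + n/2$, as claimed.

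This argument is essentially symbolic and works uniformly in $d$, so I do not expect any real obstacle; the only point to be careful about is bookkeeping the index shift between $k$ (the number of steps, which includes the final step to the origin and so ranges in $\{1,\dots,n+1\}$) and $j = k-1$ (the binomial index, ranging in $\{0,\dots,n\}$). The ``$+1$'' in the final answer comes precisely from the mandatory step that lands at the origin. I would note in passing that the analogous symmetry argument fails for the second moment, which is why the companion variance lemma in the appendix requires a genuinely combinatorial identity and is only carried out for $d \le 2$.
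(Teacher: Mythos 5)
Your proof is correct and takes essentially the same route as the paper's: reduce the mean to $1$ plus the $\binom{n}{j}^d$-weighted average of $j$ over $j\in\{0,\dots,n\}$, then use the symmetry $\binom{n}{j}=\binom{n}{n-j}$ to show that average is $n/2$. The only difference is cosmetic: you reflect the entire sum and solve the resulting equation, which avoids the paper's separate treatment of $n$ odd and $n$ even (where the middle term $k=n/2$ must be paired with itself).
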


\begin{proof}
By the definition of the first moment,
\begin{eqnarray}
\mu_d(n + 1) & \ = \ & \frac{\sum^{n + 1}_{k = 1} k \cdot t_d(k, n+1)}{s_d(n+1)}  \nonumber\\
& \ = \ & \frac{\sum^{n}_{k = 0}(k + 1)t_d(k+1, n+1)}{s_d(n+1)} \ = \  \frac{\sum^{n}_{k = 0}k\ncr{n}{k}^d + s_d(n+1)}{s_d(n+1)}.\label{firstMomentExpansion}
\end{eqnarray}

We complete the proof based on the parity of $n$. We first assume $n$ is odd. Then
\begin{eqnarray}
\sum^{n}_{k = 0}k\ncr{n}{k}^d \ = \ \sum^{\lfloor \frac{n}{2}\rfloor}_{k = 0}\left[k\ncr{n}{k}^d + (n - k)\ncr{n}{n - k}^d\right] \ = \ n\sum^{\lfloor \frac{n}{2}\rfloor}_{k = 0}\ncr{n}{k}^d.
\label{firstOrderSquareCoeffExpA}
\end{eqnarray}
Notice that by the symmetry of binomial coefficients,
\begin{eqnarray}
\sum^{\lfloor \frac{n}{2} \rfloor}_{k = 0}\ncr{n}{k}^d \ = \ \sum^{n}_{k = \lceil \frac{n}{2}\rceil}\ncr{n}{k}^d,
\label{binomCoeffSymNOdd}
\end{eqnarray}
so
\begin{eqnarray}
n\sum^{\lfloor \frac{n}{2}\rfloor}_{k = 0}\ncr{n}{k}^d \ = \ \frac{1}{2}n\sum^{n}_{k = 0}\ncr{n}{k}^d \ = \ \frac{1}{2}n s_d(n+1),
\label{firstOrderSquareCoeffExpB}
\end{eqnarray} and substituting into \eqref{firstMomentExpansion} completes the proof in this case.

Now we consider $n$ even. A similar analysis as in the previous case works, except we need to deal with the term where $k \ = \ n/2$, which is matched with itself:
\begin{eqnarray}
\sum^{n}_{k = 0}k\ncr{n}{k}^d  & \ = \  &\frac{n}{2}\ncr{n}{n/2}^d + \sum^{\frac{n}{2} - 1}_{k = 0}k\ncr{n}{k}^d + \sum^{n}_{k = \frac{n}{2} + 1}k\ncr{n}{k}^d  \nonumber\\
& \ = \ & \frac{n}{2}\ncr{n}{n/2}^d + \sum^{\frac{n}{2} - 1}_{k = 0}\left[k\ncr{n}{k}^d + (n - k)\ncr{n}{n - k}^d\right] \nonumber\\
& \ = \ & \frac{n}{2}\ncr{n}{n/2}^d + n\sum^{\frac{n}{2} - 1}_{k = 0}\ncr{n}{k}^d
\label{firstOrderSquareCoeffExpC}
\end{eqnarray}
Again utilizing the symmetry of binomial coefficients,
\begin{eqnarray}
\sum^{\frac{n}{2} - 1}_{k = 0}\ncr{n}{k}^d \ = \ \sum^{n}_{k = \frac{n}{2} + 1}\ncr{n}{k}^d,
\label{binomCoeffSymNEven}
\end{eqnarray}
so \eqref{firstOrderSquareCoeffExpC} is equivalent to
\begin{eqnarray}
\frac{n}{2}\ncr{n}{n/2}^d + \frac{n}{2}\sum_{k \in \{0, 1, \dots, n\}\setminus\{n/2\}}\ncr{n}{k}^d \ = \ \frac{n}{2}\sum^{n}_{k = 0}\ncr{n}{k}^d \ = \ \frac{n}{2} s_d(n+1),
\label{firstOrderSquareCoeffExpD}
\end{eqnarray}
completing the proof.
\end{proof}

\begin{lemma}[Standard Deviation for 2-Dimensional Simple Jump Paths]
\label{2DimStdDev}
If $\sigma_2(i)$ represents the standard deviation for the number of steps in a simple jump path in $d$-dimensions from $(i,i)$ to the origin, then
\begin{eqnarray}
\sigma_2(n+1) \ = \ \frac{n}{2\sqrt{2n - 1}}.
\label{appsimpleSquareLatticeStdDev}
\end{eqnarray}
\end{lemma}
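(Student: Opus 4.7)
The plan is to compute $\sigma_2(n+1)^2$ directly from the definition, using the formula $t_2(k, n+1) = \binom{n}{k-1}^2$ from Lemma \ref{lem:enumerateSimpleJumpPaths} together with $s_2(n+1) = \binom{2n}{n}$. With the substitution $j = k - 1$ and the mean $\mu_2(n+1) = n/2 + 1$ from the preceding lemma, the task reduces to evaluating the single binomial sum
\begin{equation*}
\sigma_2(n+1)^2 \ = \ \frac{1}{\binom{2n}{n}} \sum_{j=0}^{n} \left(j - \tfrac{n}{2}\right)^2 \binom{n}{j}^2.
\end{equation*}

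Expanding $(j - n/2)^2 = j^2 - nj + n^2/4$ splits this into three classical sums. The first, $\sum_j \binom{n}{j}^2 = \binom{2n}{n}$, is Vandermonde's identity. The second, $\sum_j j \binom{n}{j}^2$, equals $\tfrac{n}{2}\binom{2n}{n}$ by the symmetry $j \mapsto n - j$ of the binomial coefficients. The third sum is the one requiring a small computation: applying $j\binom{n}{j} = n\binom{n-1}{j-1}$ to both factors yields
\begin{equation*}
\sum_{j=0}^n j^2 \binom{n}{j}^2 \ = \ n^2 \sum_{j=1}^{n} \binom{n-1}{j-1}^2 \ = \ n^2 \binom{2n-2}{n-1},
\end{equation*}
again by Vandermonde applied to the shifted index.

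Assembling the three pieces gives
\begin{equation*}
\sum_{j=0}^{n}\left(j - \tfrac{n}{2}\right)^2 \binom{n}{j}^2 \ = \ n^2 \binom{2n-2}{n-1} - \tfrac{n^2}{4}\binom{2n}{n}.
\end{equation*}
The final step is to convert $\binom{2n-2}{n-1}$ into a multiple of $\binom{2n}{n}$ using the elementary ratio
\begin{equation*}
\frac{\binom{2n}{n}}{\binom{2n-2}{n-1}} \ = \ \frac{(2n)(2n-1)}{n^2} \ = \ \frac{2(2n-1)}{n},
\end{equation*}
so that $\binom{2n-2}{n-1} = \tfrac{n}{2(2n-1)}\binom{2n}{n}$. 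Substituting and simplifying the two terms over the common denominator $4(2n-1)$ yields $n^2/[4(2n-1)]$ times $\binom{2n}{n}$, whence $\sigma_2(n+1)^2 = n^2 / [4(2n-1)]$ and taking the square root gives the claimed formula.

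The main obstacle is really only bookkeeping: arranging the three binomial sums in a form where the $\binom{2n-2}{n-1}$ term can be converted cleanly to $\binom{2n}{n}$. No deep identity is needed beyond Vandermonde and the absorption identity $j\binom{n}{j} = n\binom{n-1}{j-1}$, which is precisely why the argument generalizes to compute the mean in all dimensions but not the variance for $d \ge 3$: in higher dimensions the same manipulation on $\binom{n}{j}^d$ produces a factor $\binom{n-1}{j-1}^2 \binom{n}{j}^{d-2}$ which is no longer a closed binomial square to which Vandermonde applies, matching the remark in Section \ref{sec:meanVar} that the combinatorial argument breaks down past $d = 2$.
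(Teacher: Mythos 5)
Your proof is correct and follows essentially the same route as the paper's: both reduce the computation to the identity $\sum_{k} k^2\binom{n}{k}^2 = n^2\binom{2n-2}{n-1}$ (via the absorption identity and Vandermonde) together with the ratio $\binom{2n}{n}/\binom{2n-2}{n-1} = 2(2n-1)/n$. The only cosmetic difference is that you expand the centered sum $\sum_j (j-n/2)^2\binom{n}{j}^2$ directly, whereas the paper computes the raw second moment of $K$ and subtracts the squared mean; the algebra and the final value $\sigma_2(n+1)^2 = n^2/(8n-4)$ agree.
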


As the variance in the one-dimensional case is well known (it is the variance of a binomial random variable), we provide details only for $d \ = \ 2$. As remarked earlier, the combinatorial approach taken below does not generalize to higher $d$.

\begin{proof} We use the simple closed form expression for $s_2(n+1)$, namely that it equals $\ncr{2n}{n}$. By the definition of the second standardized moment and use of \eqref{simpledDimLatticeMean} where $d \ = \ 2$, we have
\begin{eqnarray}
\sigma_2(n+1)^2 \ = \ \frac{\sum^{n + 1}_{k = 1}k^2\ncr{n}{k - 1}^2}{\ncr{2n}{n}} - \left(\frac{1}{2}n + 1\right)^2.
\label{simpleSquareLatticeStdDevExpA}
\end{eqnarray}	
Shifting the index of summation to start at $k = 0$ and expanding yields
\begin{eqnarray}
\sigma_2(n+1)^2 & \ = \  &\frac{\sum^{n}_{k = 0}(k + 1)^2\ncr{n}{k}^2}{\ncr{2n}{n}} - \left(\frac{1}{2}n + 1\right)^2 \nonumber\\
& \ = \ & \frac{\sum^{n}_{k = 0}k^2\ncr{n}{k}^2}{\ncr{2n}{n}} + \frac{\sum^{n}_{k = 0}2k\ncr{n}{k}^2}{\ncr{2n}{n}} + \frac{\sum^{n}_{k = 0}\ncr{n}{k}^2}{\ncr{2n}{n}} - \left(\frac{1}{2}n + 1\right)^2  \nonumber\\
& \ = \ & \frac{\sum^{n}_{k = 0}k^2\ncr{n}{k}^2}{\ncr{2n}{n}} + 2\mu_2(n + 1) - \frac{\sum^{n}_{k = 0}\ncr{n}{k}^2}{\ncr{2n}{n}} - \left(\frac{1}{2}n + 1\right)^2.
\label{simpleSquareLatticeStdDevExpAA}
\end{eqnarray}
Using \eqref{simpleSquareLatticeMean} for the mean and recalling that $\sum^{n}_{k = 0}\ncr{n}{k}^2 = \ncr{2n}{n}$, we have
\begin{eqnarray}
\sigma_2(n+1)^2 & \ = \ & \frac{\sum^{n}_{k = 0}k^2\ncr{n}{k}^2}{\ncr{2n}{n}} + 2\left(\frac{1}{2}n + 1\right) - 1 - \left(\frac{1}{2}n + 1\right)^2 \nonumber\\ & \ = \ & \frac{\sum^{n}_{k = 0}k^2\ncr{n}{k}^2}{\ncr{2n}{n}} - \frac{n^2}{4}.
\label{simpleSquareLatticeStdDevExpAB}
\end{eqnarray}
We now use the identity
\begin{eqnarray}
\sum^{n}_{k = 0}k^2\ncr{n}{k}^2 \ = \ n^2\ncr{2n - 2}{n - 1},\label{secondOrderSquareCoeff}
\end{eqnarray}
which we quickly prove for completeness. To see this, expand the binomial coefficient and cancel $k$'s:
\begin{eqnarray}
\sum^{n}_{k = 0}k^2\ncr{n}{k}^2 \ = \ \sum^{n}_{k = 0}k^2\left(\frac{n!}{k!(n - k)!}\right)^2 \ = \ \sum^{n}_{k = 1}n^2\left(\frac{(n - 1)!}{(k - 1)!(n - k)!}\right)^2.
\label{secondOrderSquareCoeffProofA}
\end{eqnarray}
Shifting indices, we can rewrite the above as \begin{eqnarray} \sum^{n}_{k = 0}k^2\ncr{n}{k}^2 \ = \ n^2 \sum_{\ell =0}^{n-1} \ncr{n-1}{\ell}^2 \ = \ n^2 \sum_{\ell=0}^{n-1} \ncr{n-1}{\ell} \ncr{n-1}{n-1-\ell}, \end{eqnarray} and as we have seen numerous times the sum equals $\ncr{2n-2}{n-1}$ (it is the number of ways to choose $n-1$ objects from $2n-2$, where we consider $n-1$ of the items to be in one set and the remaining $n-1$ in another).
Substituting \eqref{secondOrderSquareCoeff} into \eqref{simpleSquareLatticeStdDevExpAB} gives
\begin{eqnarray}
\sigma_2(n+1)^2 & \ = \ & \frac{n^2\ncr{2n - 2}{n - 1}}{\ncr{2n}{n}} - \frac{n^2}{4} \nonumber\\
& \ = \ &\frac{n^3}{4n - 2} - \frac{n^2}{4}  \ = \ \frac{n^2}{8n - 4}.
\label{simpleSquareLatticeStdDevExpB}
\end{eqnarray}	
Taking the square root of both sides of \eqref{simpleSquareLatticeStdDevExpB} gives the desired result.
\end{proof}

We remark on the difficulty in generalizing the above argument to arbitrary $d$. The problem is in \eqref{secondOrderSquareCoeff}. There it was crucial that $d \ = \ 2$, as we then canceled the $k^2$ with the two factors of $k$ in the denominator. In higher dimensions we do not have such perfect alignment.


\ \\
\end{document}